\newcolumntype{?}{!{\vrule width 2.5pt}}
\theoremstyle{plain}
\newtheorem{theorem}{Theorem}
\newtheorem{corollary}{Corollary}
\newtheorem{lemma}{Lemma}
\newtheorem{definition}{Definition}
\begin{document}

\begin{frontmatter}
\title{Stochastic Sandpile on a Cycle}
\runtitle{Stochastic Sandpile on a Cycle}

\begin{aug}
\author[A]{\inits{AM}\fnms{Andrew} \snm{Melchionna}\ead[label=e1]{am2433@cornell.edu}},
\address[A]{1. \printead{e1}}
\end{aug}

\begin{abstract}
In the stochastic sandpile model on a graph, particles interact pairwise as follows: if two particles occupy the same vertex, they must each take an independent random walk step with some probability $0<p<1$ of not moving. These interactions continue until each site has no more than one particle on it. We provide a formal coupling between the stochastic sandpile and the activated random walk models, and we use the coupling to show that for the stochastic sandpile with $n$ particles on the cycle graph $\mathbb{Z}_n,$ the system stabilizes in $O(n^3)$ time for all initial particle configurations, provided that $p(n)$ tends to $1$ sufficiently rapidly as $n \rightarrow \infty$.
\end{abstract}

\begin{keyword}
\kwd{Interacting particle systems}
\kwd{Coupled Markov chains}
\kwd{Self-organized criticality}
\end{keyword}

\end{frontmatter}


\section{Introduction}
Sandpile models were first introduced by Bak, Tang and Wiesenfield in the 1980's \citep{r1}. While sandpile models come in many different forms (e.g. \citep{r2, r3, r4}), they can all be described as a system of interacting particles which live on the vertices of a graph, and move (deterministically or randomly) along the edges of the graph until the particles are sufficiently well-dispersed. This paper is concerned primarily with the so-called stochastic sandpile model (abbreviated SS) and its dynamics on the cycle graph, particularly with the number of stochastic updates needed until a stable state has been reached. We present an overview of the model here, giving the details in Section 2. 

Begin with $n$ indistinguishable particles on the vertices of $\mathbb{Z}_n$ in an arbitrary configuration. The locations of the particles are updated in discrete time steps as follows. If at any time a site has two or more particles on it, the site is deemed \textit{unstable} and must be \textit{toppled}. In toppling the unstable site, we let two particles at the site each perform an independent, lazy, symmetric random walk on the graph. That is, for \textit{lazy parameter} $0<p(n)<1$, a particle does not move with probability $p(n),$ moves one step clockwise with probabilty $\frac{1-p(n)}{2},$ and moves one step counterclockwise with probability $\frac{1-p(n)}{2}.$ Both particles move independently according to this distribution. This process continues, toppling one unstable site per timestep, until each site has exactly one particle, at which time the system is deemed \textit{SS-stable}. Define the \textit{odometer function for SS} to be the function $v: \mathbb{Z}_n \rightarrow \mathbb{N}\cup \{0\}$ which counts the number of times each site topples in the stabilization process. The stochastic sandpile model enjoys the so-called abelian property, which states that given an arbitrary initial particle configuration and quenched randomness, the odometer function is independent of the choice of unstable site that we topple at each time step, making $v$ a well-defined random function (see Section 2.3). 

Our main result will compare SS to a similar system: the activated random walk model (abbreviated ARW) (see \citep{r6}). We briefly describe ARW as a continuous-time process here, and give a discrete-time construction in the Section 2. Begin with $n$ particles on $\mathbb{Z}_n$. A particle in the ARW model can be in one of two states, "active" or "sleeping". Active particles perform continuous-time symmetric random walk along the graph, dictated by a Poisson clock with jump rate 1. An active particle also carries a second, independent Poisson clock with rate $\lambda$ whose ticks tell the particle to try to fall asleep. If the particle is alone on its vertex, it successfully changes its state to "sleeping". If it shares its vertex with other particles, it fails to sleep and remains in its "active" state. Sleeping particles can only exist on a vertex where they are the only particle, and remain asleep until another particle visits their vertex, at which time the particle instantaneously changes to an active state. This process continues until the configuration is \textit{ARW-stable}, that is, until there is exactly one sleeping particle at every vertex (and no active particles). We define the \textit{odometer function for ARW} to be the function $u: \mathbb{Z}_n \rightarrow \mathbb{N} \cup \{0\}$ which counts the number of times a Poisson clock (including the jump clocks and the sleep clocks) ticked at each site $x$ during the stabilization process. 

Let $\eta$ represent a state of ARW (i.e. the number particles in each state are on each site of $\mathbb{Z}_n$), and let $|\eta|$ represent the corresponding SS state with the same number of particles on each site in $\eta$, ignorant of whether the particles are active or sleeping. Insofar as we are concerned with ARW as it relates to the SS dynamics, we define the random time $T_{-1}$ to be the first time that the ARW model has one particle (regardless of state) on each site in $\mathbb{Z}_n$, and we define $\overline{u}$ to be the odometer function immediately after $T_{-1}$ (see section 2 for a rigorous formulation).

\begin{theorem}
Consider the ARW model with sleep rate $\lambda$ starting from arbitrary $n$-particle state $\eta_0,$ and let $u$ and $\overline{u}$ represent the odometer functions for the full stabilization process and stabilization process stopped at time $T_{-1}.$ Similarly, let $v$ represent the odometer function for SS with lazy parameter $p = \frac{\lambda}{1+\lambda}$ and starting with state $|\eta_0|.$ Then for all $x\in \mathbb{Z}_n$,
\[
\lceil \frac{\overline{u}(x)}{2} \rceil \overset{d}{=} v(x)
\]

and $\overline{u}(x) \leq u(x)$ almost surely for all $x \in \mathbb{Z}_n$.
\end{theorem}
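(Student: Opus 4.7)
The proof is by a coupling argument between SS and ARW via shared random instruction stacks, together with the abelian property of both models as established in Section 2.

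First, I would introduce for each site $x \in \mathbb{Z}_n$ an i.i.d. sequence $(\xi_x^i)_{i \geq 1}$ of instructions taking values in $\{\mathrm{stay}, \mathrm{right}, \mathrm{left}\}$ with respective probabilities $p$, $(1-p)/2$, $(1-p)/2$. These instructions drive both processes simultaneously: in SS, the $k$-th topple at $x$ moves its two particles using $\xi_x^{2k-1}$ and $\xi_x^{2k}$; in ARW, each clock tick on an active particle at $x$ consumes the next unused instruction, where ``stay'' is interpreted as a sleep attempt (successful iff the particle is alone) and ``right''/``left'' as directional jumps. The matching $p = \lambda/(1+\lambda)$ ensures this ARW interpretation faithfully reproduces the ratio of the sleep rate $\lambda$ to the total rate $1+\lambda$ at each active particle.

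Next, I would invoke the abelian property of ARW to choose a particular execution order in which ARW ticks are paired with SS topples: whenever SS topples $x$, the next two ARW ticks are performed at $x$. Since any site with $\geq 2$ particles has only active particles (a sleeper there would have been woken by the arrival of the second particle) and sleep attempts at such sites fail, each ARW tick at such a site has exactly the effect of one SS lazy random-walk step. An induction on SS topples then shows that the SS and the coupled ARW share the same particle configuration after each matched pair of ticks.

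In this coupled execution, $T_{-1}$ occurs at the first tick after which the configuration is $1$-per-site; this lands either at the end of an SS-matched pair of ticks (in which case $\overline{u}(x) = 2 v(x)$ at every site) or in the middle of a pair at some $x$ (in which case $\overline{u}(x) = 2 v(x) - 1$ at that site and $\overline{u}(y) = 2 v(y)$ elsewhere). In all cases $\lceil \overline{u}(x)/2 \rceil = v(x)$ almost surely under the coupling, and the abelian invariance of ARW odometer distributions across valid execution orders upgrades this pointwise identity to the distributional equality $\lceil \overline{u}(x)/2 \rceil \stackrel{d}{=} v(x)$ under the natural ARW dynamics.

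The second claim $\overline{u}(x) \leq u(x)$ holds almost surely, since $T_{-1}$ occurs no later than full ARW-stabilization and the odometer is monotone non-decreasing in time. The main technical obstacle is the third step: carefully verifying the mid-pair versus end-of-pair case analysis, and confirming that the abelian property can be used to equate the distribution of $\overline{u}$ at the intermediate stopping time $T_{-1}$ (and not just at full ARW-stabilization), so that the coupling-based pointwise identity transfers to the $\overline{u}$ of the natural continuous-time ARW.
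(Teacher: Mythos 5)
Your proposal is correct and rests on the same central observation as the paper: pair two ARW topplings at a site with one SS toppling there, and note that with $p = \lambda/(1+\lambda)$ the effect of an ARW tick on the unlabeled particle counts is exactly one lazy random walk step, so the ceiling $\lceil \overline{u}(x)/2\rceil$ accounts for the possibility that $T_{-1}$ falls mid-pair at the last toppled site. The implementations differ: the paper builds no explicit shared-randomness coupling, but instead fixes compatible toppling prescriptions (two ARW ticks per SS topple, always at the clockwise-most unstable site), proves a general ``quotient Markov chain'' lemma, checks that the partition of ARW states obtained by forgetting sleep labels is Markov-compatible away from the all-ones cell, and matches the quotient transition probabilities against the SS toppling table; your instruction-stack coupling is more elementary and gives an almost-sure identity rather than only a distributional one, at the cost of carrying an induction on configurations by hand. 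Two points to tighten. First, the second tick of a pair may occur at a site left with a single active particle, in which case the sleep attempt \emph{succeeds}; this does not break the argument, since the effect on the particle counts is still ``stay,'' but your blanket claim that sleep attempts fail at toppled sites is not literally true for the second tick, and the induction should be phrased at the level of $|\eta|$. Second, the worry you flag about invoking the abelian property at the intermediate time $T_{-1}$ is well placed but turns out to be moot: the paper defines $\overline{u}$ relative to its fixed toppling prescription (the stopped chain of its Definition 5), so no order-invariance of the stopped odometer is needed --- only $u$ and $v$ need to be order-independent, which Lemmas 1 and 2 supply, and $\overline{u}(x)\leq u(x)$ then follows from the least action principle applied to the legal (not yet stabilizing) stopped sequence.
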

By combining Theorem 1 with Lemma 3, it follows that that for $p(n)$ tending to $1$ sufficiently rapidly, the stochastic sandpile on the $n$-cycle stabilizes in time $O(n^3)$ when starting from an arbitrary $n$-particle configuration.
\begin{corollary} Consider the stochastic sandpile model starting from an arbitrary configuration of $n$ particles on $\mathbb{Z}_n,$ and let $0 < p(n) < 1$ be a function of $n$ satisfying 
\[
\limsup_{n \rightarrow \infty} \,[\log(n) \cdot \big(1-p(n)\big) ]< 2.
\]
Then
\[
\lim_{n \rightarrow \infty} \mathbbm{P} (T > A n^3) = 0,
\]
where $T$ is the total time to stabilization, and $A$ is a constant independent of $n$. \end{corollary}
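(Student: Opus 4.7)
The plan is to express the total stabilization time $T$ as a sum of the SS odometer, transfer this to the ARW side via the coupling of Theorem~1, and then invoke Lemma~3 to control the ARW odometer. First I will note that, because SS topples exactly one unstable site per discrete time step, one has the pathwise identity $T = \sum_{x \in \mathbb{Z}_n} v(x)$, so it suffices to control the total SS odometer.

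Next I would apply Theorem~1. The theorem gives $v(x) \stackrel{d}{=} \lceil \overline{u}(x)/2 \rceil$ for each $x$, together with $\overline{u}(x) \leq u(x)$ a.s. Because this identity comes from the formal coupling between SS and ARW announced in the abstract and constructed in Section~2, under that coupling it in fact holds jointly across sites, so when combined with $\overline{u} \leq u$ it yields the pathwise bound
\[
T \;=\; \sum_x v(x) \;\leq\; \sum_x \Big\lceil \tfrac{u(x)}{2}\Big\rceil \;\leq\; \tfrac{1}{2} \sum_x u(x) + \tfrac{n}{2}
\]
on a single probability space carrying both processes.

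Finally I would invoke Lemma~3. Via $\lambda = p/(1-p)$, the hypothesis $\limsup \log(n)(1-p(n)) < 2$ translates to $\lambda(n) \geq (1+o(1))\log(n)/2$, which is presumably the regime in which Lemma~3 supplies a high-probability estimate of the form $\mathbb{P}\big(\sum_x u(x) > B n^3\big) \to 0$ for some constant $B$. Choosing $A > B/2 + 1$ and using the preceding display then gives $\mathbb{P}(T > An^3) \leq \mathbb{P}\big(\sum_x u(x) > Bn^3\big) \to 0$, as required.

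The main obstacle is the joint coupling in the middle step: one must verify that the coupling of Section~2 matches the fields $v$ and $\overline{u}$ sitewise on a common probability space, rather than only producing per-site marginal equality in distribution. If only marginal equality were available, Theorem~1 would merely yield $\mathbb{E}[T] = O(n^3)$, and Markov's inequality would produce a constant bound on $\mathbb{P}(T > An^3)$ rather than a vanishing one. A secondary technical point is the precise form of the ARW odometer estimate in Lemma~3 in the critical regime where $\lambda(n)$ is of order $\log n$; this is where the actual model-specific work underlying the corollary sits.
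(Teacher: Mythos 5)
Your proposal is correct and takes essentially the same route as the paper, which proves the corollary exactly by combining Theorem 1 with Lemma 3; your worry about joint versus merely marginal equality is resolved by the paper's construction, since Lemma 6 realizes the SS chain as the projection of the stopped ARW chain on a common probability space, making $v(x)=\lceil \overline{u}(x)/2\rceil$ a pathwise identity under that coupling. The only small imprecision is the translation of the hypothesis: since $\lambda = p/(1-p)$ and $p(n)\to 1$, the condition $\limsup\,\log(n)(1-p(n))<2$ gives $\liminf \lambda(n)/\log(n) > 1/2$ as a \emph{strict} inequality (slightly stronger than your ``$\lambda(n)\geq (1+o(1))\log(n)/2$''), which is exactly the hypothesis required by Lemma 3.
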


Sandpiles serve as models of self-organized criticality \citep{r9}, which is the tendency of certain physical systems to gravitate towards critical states without the tuning of any parameters. An important signature of criticality found in these models is that for sandpiles on infinite lattices, avalanche sizes follow a power-law distribution \citep{r10}, where an avalanche is the propagation of waves of high particle density through regions of low particle density. Lack of control over the effects of these avalanches remains an obstacle to proofs of fast stabilization of the stochastic sandpile model. For example, say the n-cycle contains a region of stability, that is, an interval along which each site contains exactly one particle. If a particle enters that interval and causes one of the sites to become unstable, large chain reactions can occur in which many of the sites in the interval must topple, leaving disastrously unpredictable particle configurations in its wake.

As we develop below, the ARW model couples nicely to the SS model, has dynamics on the cycle graph which are well-studied \citep{r5, r7}, and can alleviate some of the aforementioned difficulties in analyzing SS. Aside from its own mathematical interest, the ARW model is useful to proofs of fast stabilization of the stochastic sandpile model for two main reasons. Firstly, the ARW model provides more paths to stability, in the sense that particles move independently rather than in pairs. This flexibility can be exploited by choosing the combinations of particle moves which are simplest and best-suited for a given analysis. The second important facet of ARW compared to SS is that its stabilization takes longer, since all particles must be asleep in addition to being alone on a vertex in order to be stable. This inequality between the stabilization times of the two models is useful to the goal of upper-bounding the stabilization time of SS. The proof of the Corollary 1 uses an analogous result proven in \citep{r7}, which shows that ARW on the $n$-cycle undergoes a phase transition: for $\lambda(n)$ growing sufficiently rapidly with $n \rightarrow \infty$, stabilization occurs in $O(n^3)$ time, with stabilization time exponential in $n$ otherwise. We state the fast-phase result below in Lemma 3. 

\begin{figure}
\label{SSPlots} 
 \centering
\captionsetup{singlelinecheck=off}
\includegraphics[width = 4in]{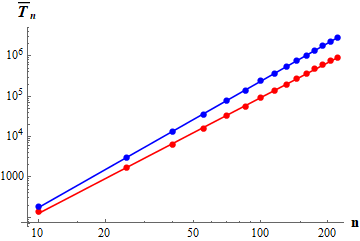}
\caption{\textbf{SS simulations:} A log-log plot of average time to stabilization $\overline{T}_n$ over 200 trials for various values of $n$, with linear fits. The colors of the plots correspond to lazy parameters $\textcolor{blue}{p = \frac{1}{2}}$ and $ \textcolor{red}{p = \frac{\log(n)}{1 + \log(n)}}$. The slope of the blue line is $\approx \textcolor{blue}{3.14}$ and the slope of the red line is $\approx \textcolor{red}{2.88}$.}
\end{figure}

An outstanding open problem, whose investigation led to this result, is to prove that SS on the $n$-cycle with \textit{constant} (i.e. independent of $n$) lazy parameter $p$ stabilizes in polynomial time. Simulations suggest that the system does indeed stabilize in time $O(n^3).$ In Figure 1, we provide a log-log plot of average stabilization time vs. $n$. The result appears linear with slopes close to $3$, giving evidence for $O(n^3)$ stabilization time of the constant-laziness SS model. The heuristic for fact that fast stabilization of the SS model with large lazy parameter appears easier to prove than that of the constant laziness SS model is as follows: despite there being more null topplings (topplings with no particles being displaced) in the lazy version, there are fewer moves resulting in both particles being displaced, which is the mechanism driving the unpredictability of avalanches.

Some previous investigations have focused on phase transitions of SS of infinite lattices. In \citep{r4}, the authors consider SS on $\mathbb{Z},$ with initial particle configuration given by i.i.d. Poisson random variables with mean $\mu$ at each site. They show the existence of a critical density $\mu_c$ such that for $\mu < \mu_c$ the system will eventually stabilize, but for $\mu > \mu_c,$ the system does not stabilize. On finite graphs with sufficiently few particles, the system stabilizes in finite time almost surely, so the question of how long it the system will take to stabilize seems like a natural one. It is worth noting that for any more than $n$ particles on a finite graph with $n$ vertices, the system cannot stabilize, since there are no configurations with at most one particle on each site. Thus, this paper investigates the stabilization properties of a system with the maximum number of particles.

The remainder of the paper is structured as follows. In Section 2, we rigorously introduce both models and some of their important properties. In Section 3, we develop some Markov chain machinery that allows us to couple the two models, with the goal of expressing the SS Markov chain as a "quotient" of ARW Markov chain. In Section 4 we develop the coupling and give the proof of Theorem 1.

\section{Preliminaries}
\subsection{The Stochastic Sandpile Model on $\mathbb{Z}_n$}
We label the sites in $\mathbb{Z}_n$ as $\{0,1,2,..., n-1\},$ with indices increasing as we go counterclockwise. We make the following definition: 
\begin{definition}
A \textbf{stochastic sandpile configuration} is a function $s: \mathbb{Z}_n \rightarrow \mathbb{N} \cup \{0\}$ satisfying
\begin{equation}
\sum_{x \in \mathbb{Z}_n} s(x) = n,
\end{equation}
and we let $S$ refer to the set of all stochastic sandpile configurations.
\end{definition}
We think of $s(x)$ as representing the number of particles on site $x \in \mathbb{Z}_n$, with equation (1) enforcing that there must always be exactly $n$ total particles on $\mathbb{Z}_n.$

For a given configuration $s$, a site $x$ is called \textit{unstable for} $s$ if $s(x) \geq 2.$ If such a site exists, we call $s$ unstable, otherwise, we call $s$ stable.

\begin{table}
\begin{center}
{\renewcommand{\arraystretch}{.001}
\begin{tabular}{ | m{1cm}  | m{10em} | m{1.8cm} | }
  \Xhline{5\arrayrulewidth}
\vspace{.1in}
  $\tau^x$ \vspace{.1in} & Transformation of $s$ & Probability 
\\ 
    \Xhline{5\arrayrulewidth}
 $\tau^x_{(0,0)}$ &\begin{center} None \end{center} & $p^2$  \\ 
  \Xhline{1\arrayrulewidth}
   $\tau^x_{(1,0)}$ & {\begin{align*} s(x) & \rightarrow s(x) - 1 \\ s(x-1) & \rightarrow s(x-1) + 1\end{align*}} & $p (1-p)$  \\
    \Xhline{1\arrayrulewidth}
   $\tau^x_{(0,1)}$ & {\begin{align*} s(x) & \rightarrow s(x) - 1 \\ s(x+1) & \rightarrow s(x+1) + 1\end{align*}} &  $p (1-p)$ \\
   \Xhline{1\arrayrulewidth}
   $\tau^x_{(1,1)}$ & {\begin{align*} s(x) & \rightarrow s(x) - 2 \\ s(x+1) & \rightarrow s(x+1) + 1 \\ s(x-1) & \rightarrow s(x-1) + 1\end{align*}} &  $\frac{ (1-p)^2}{2}$ \\
    \Xhline{1\arrayrulewidth}
   $\tau^x_{(2,0)}$& {\begin{align*} s(x) & \rightarrow s(x) - 2 \\ s(x-1) & \rightarrow s(x-1) + 2\end{align*}} & $\frac{ (1-p)^2}{4}$  \\
  \Xhline{1\arrayrulewidth}
   $\tau^x_{(0,2)}$& {\begin{align*} s(x) & \rightarrow s(x) - 2 \\ s(x+1) & \rightarrow s(x+1) + 2\end{align*}}& $\frac{ (1-p)^2}{4}$  \\
    \Xhline{5\arrayrulewidth}
\end{tabular}
}
\end{center}
\caption{\label{tab:table-name}All values of the SS toppling operator $\tau^x$ with corresponding effects on the configuration and probabilities of occurence.}
\end{table}

The stochastic sandpile model begins with an initial particle configuration $s_0$. We stabilize the sandpile in discrete time steps as follows. Let $s_t$ represent the particle configuration at time $t \in \mathbb{N} \cup \{0\}$.We choose a site $x_t$ which is unstable for $s_t$ and \textit{topple} this site by letting two particles perform independent lazy symmetric random walk steps with lazy parameter $0 < p(n) < 1$. That is, two particles at site $x_t$ each relocate independently according to the following distribution: do not move with probability $p(n),$ step clockwise with probability $\frac{1-p(n)}{2},$ and step counterclockwise with probability $\frac{1-p(n)}{2}.$ Henceforth, we leave implicit the dependence of $p$ on $n$ for ease of notation. We summarize the results of a stochastic sandpile toppling with the pair of nonnegative integers $(\rho_-,\rho_+),$ where $\rho_-$ and $\rho_+$ represent the number of particles that stepped clockwise and counterclockwise, respectively, in a toppling. Note that $(\rho_-, \rho_+)$ must satisfy the following two conditions:
\begin{align}
\rho_-, \rho_+ &\geq 0 \\
\rho_- + \rho_+ &\leq 2 .
\end{align}

We let $\tau^x$ be the (random) operator which represents a toppling at site $x,$ with $\tau^x$ taking values $\tau^x_{(\rho_-,\rho_+)}$. We denote the resulting configuration by $s_{t+1} = \tau^{x_t} s_t$. Table 1 summarizes the possible values of $\tau^x$ and their effects on the configuration $s$.

\subsection{The Activated Random Walk Model on $\mathbb{Z}_n$}
In the activated random walk model, particles are in one of two states, "active" or "sleeping". We label the sites in $\mathbb{Z}_n$ in the same manner as above. Let $\mathbb{N}_0 := \mathbb{N} \cup \{0\},$ and define the ordered set $\mathbb{N}_{\mathfrak{s}} = \mathbb{N}_0 \cup \{\mathfrak{s}\},$ with $0 < \mathfrak{s} < 1 < 2 < ...\,$. We define an activated random walk particle configuration which describes the number and type of particles at each site.

\begin{definition}
An \textbf{activated random walk configuration} is a function $\eta: \mathbb{Z}_n \rightarrow \mathbb{N}_{\mathfrak{s}} \cup \{0\}$ with
\begin{equation}
\sum_{x \in \mathbb{Z}_n} |\eta(x)| = n,
\end{equation}
where we let $|\mathfrak{s}| := 1.$ We let $H$ refer to the set of all activated random walk configurations. 
\end{definition}
$\eta(x) = \mathfrak{s}$ indicates that there is a single sleeping particle on site $x$, $\eta(x) \in \mathbb{N}$ indicates that there are $\eta(x)$ active particles on $x$, and $\eta(x) = 0$ indicates that there are no particles on $x$. Equation (2) enforces that there are $n$ total particles on $\mathbb{Z}_n.$

For a given configuration $\eta,$ we call a site $x$ \textit{unstable} if $\eta(x) \geq 1.$ A configuration with at least one unstable site is deemed unstable, otherwise it is stable. Note that $\eta$ is stable if and only if $\eta(x) = \mathfrak{s}$ for all $x$, that is, if each site contains one sleeping particle and no particles are active.

We describe here the discrete-time formulation of the model, whose dynamics are equivalent with the continuous-time formulation given in the introduction. Our activated random walk model will begin with an arbitrary initial $n$-particle configuration $\eta_0$. We stabilize in discrete time steps as follows. Let $\eta_t$ represent the particle configuration at time $t \in \mathbb{N} \cup \{0\}$.We choose a site $x_t$ which is unstable for $\eta_t$ and \textit{topple} this site by letting an active particle try to fall asleep with probability $\frac{\lambda}{1+ \lambda}$, and otherwise taking a (non-lazy) symmetric random walk step with probability $\frac{1}{1+\lambda}.$ Note that an ARW toppling involves the possible displacement of one particle, while an SS toppling involves the possible displacements of two particles. 

We now describe the effects of each toppling move on the configuration $\eta$. Firstly, sleeping particles can only exist on a site by themselves. So, if we are toppling an active particle at a site $x$ where $\eta(x) \geq 2$ and that particle tries to go to sleep, it fails and remains active (remaining at site $x$). If it is alone (i.e. $\eta(x) = 1$), then it successfully goes to sleep (so that $\eta(x)$ becomes $\mathfrak{s}$). Secondly, if an active particle takes a step and arrives at a site containing a sleeping particle, the sleeping particle immediately wakes up, resulting in two active particles. 

We let $\delta^x$ be the operator which represents a toppling at site $x,$ with $\delta^x$ taking values $\delta^x_{-1},\delta^x_{+1},\delta^x_{\mathfrak{s}}$, which represent a particle taking a step clockwise, counterclockwise, and attempting to fall asleep, respectively. We denote the resulting configuration by $\eta_{t+1} = \delta^{x_t} \eta_t$. 

We formalize the above by defining the following operations on $\mathbb{N}_{\mathfrak{s}}:$
\begin{align}
1\cdot\mathfrak{s} &= \mathfrak{s} &\\
k\cdot \mathfrak{s} &= k & k \geq 2  \\
k + \mathfrak{s} &= k + 1  &k \geq 1 \\
k - \ell &= k - \ell  & k \geq \ell &\geq 0,  \quad k, \ell \in \mathbb{N}_{0}
\end{align}
and summarizing the distribution of the results of an ARW toppling at site $x$ in Table 2.
\begin{table}
\begin{center}
{\renewcommand{\arraystretch}{.001}
\begin{tabular}{ ? m{1cm}  | m{10em} | m{1.8cm} ? }
    \Xhline{5\arrayrulewidth}
\vspace{.1in}
   $\delta^x$ \vspace{.1in} & Transformation of $\eta$ & Probability \\ 
    \Xhline{5\arrayrulewidth}
\vspace{.1in}
$\delta^x_{\mathfrak{s}}$ \vspace{.1in}&\begin{center}$\eta(x) \rightarrow \eta(x) \cdot \mathfrak{s} $ \end{center}& $\frac{\lambda}{1 + \lambda}$ 

  \\ 
  \Xhline{1\arrayrulewidth}
  $\delta^x_{+1}$&{\begin{align*} \eta(x) & \rightarrow \eta(x) - 1 \\ \eta(x+1) & \rightarrow \eta(x+1) + 1 \end{align*}} & $\frac{\lambda}{2(1 + \lambda)}$  \\
  \Xhline{1\arrayrulewidth}
  $\delta^x_{-1}$&{\begin{align*} \eta(x) & \rightarrow \eta(x) - 1 \\ \eta(x-1) & \rightarrow \eta(x-1) + 1 \end{align*}} &  $\frac{1}{2(1 + \lambda)}$ \\
  \\
    \Xhline{5\arrayrulewidth}
\end{tabular}
}
\end{center}
\caption{\label{tab:table-name}All values of the ARW toppling operator $\delta^x$ with corresponding effects on the configuration and probabilities of occurence.}
\end{table}

We update our configuration at time $t$ by choosing an unstable site $x_t$ and toppling it. We let the effects of the toppling move performed at $x_t$ be encoded in the (random) toppling operator $\delta_{x_t},$ so that $\eta_{t+1} = \delta_{x_t} \eta_t.$

\subsection{Sitewise Representation for \textbf{SS} and \textbf{ARW} on $\mathbb{Z}_n$}
We now introduce a framework for the two models which prescribes all of the randomness from the start, making the dynamics easier to study (see \citep{r8} for an early example of this formulation). For each model, we consider a \textit{random field of instructions}
\begin{align*}
\mathcal{I}_{SS} &= \{\tau_x^i \, : \, x \in \mathbb{Z}_n, \, i \in \mathbb{N}\} \\
\mathcal{I}_{ARW} &= \{\delta_x^i \, : \, x \in \mathbb{Z}_n, \, i \in \mathbb{N}\}.
\end{align*}
The set of instructions can be thought of as an infinite stack of instructions at each site in $\mathbb{Z}_n,$ and each time we topple a site $x$, we use the toppling operator at the top of the stack at $x$ and discard it. All of the randomness of the model is stored in the random set of instructions.

The following discussion applies to both SS and ARW. Let $\alpha = \{x_0,x_1,...\}$ represent the chronologically-ordered sequence of sites which are toppled, with length $T \in \mathbb{N} \cup \{\infty\}.$ We call $\alpha$ \textit{legal} if $x_t$ is unstable for all $t$. We call $\alpha$ $\textit{stabilizing}$ if the sequence of toppling moves results in the stable configuration. Define the odometer function $v_\alpha : \mathbb{Z}_n \rightarrow \mathbb{N}_0 \cup \{\infty\}$ to be the function which counts the number of times each site has toppled in the sequence $\alpha$. We now state the least action principle, which asserts that any legal sequence can be at most as long as a legal, stabilizing sequence.

\begin{lemma}[Least Action Principle, \citep{r6, r4}]
Let $\alpha$ and $\beta$ both be legal sequences for SS. If $\beta$ is stabilizing, then $v_\alpha(x) \leq v_\beta(x)$ for all $x$.

The same result holds for ARW odometers.
\end{lemma}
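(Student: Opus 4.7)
The plan is to work throughout in the sitewise (random field of instructions) formulation, where the configuration produced by a sequence of topplings depends only on the resulting odometer, and to prove the stronger pointwise statement that the odometer $v_t$ of $\alpha$'s first $t$ topplings satisfies $v_t \leq v_\beta$ for every $t$, by induction on $t$; letting $t$ run over all steps gives $v_\alpha \leq v_\beta$. The base case $t=0$ is trivial. The only nontrivial case in the inductive step occurs when $v_{t-1}(x_t) = v_\beta(x_t)$ at the site $x_t$ toppled next, so all the work goes into deriving a contradiction there by comparing the configuration $s^{v_{t-1}}$ produced by $v_{t-1}$ with the stable configuration $s^{v_\beta}$ produced by $v_\beta$.

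For SS, the configuration $s^v$ reached under odometer $v$ obeys the accounting identity
\[
s^v(x) = s_0(x) - D_x^{v(x)} + \sum_{y \sim x} A_y^{v(y)}(x),
\]
where $D_x^k$ counts the particles dispatched from $x$ in its first $k$ topplings and $A_y^k(x)$ counts those among $y$'s first $k$ topplings that sent a particle to $x$; both are monotone nondecreasing in $k$. At the critical step the $D$ term at $x_t$ is the same under $v_{t-1}$ and $v_\beta$, while the neighbor sum is no larger under $v_{t-1}$. Hence $s^{v_{t-1}}(x_t) \leq s^{v_\beta}(x_t)$; combined with the legality inequality $s^{v_{t-1}}(x_t) \geq 2$ this contradicts the stability of $s^{v_\beta}$.

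For ARW the same scheme applies to the total particle count. Writing $M_x^k$ for the number of move (non-sleep) instructions among the first $k$ at $x$,
\[
|\eta^v(x)| = |\eta_0(x)| - M_x^{v(x)} + \sum_{y \sim x} A_y^{v(y)}(x),
\]
so at the critical step $|\eta^{v_{t-1}}(x_t)| \leq |\eta^{v_\beta}(x_t)|$. If $|\eta^{v_{t-1}}(x_t)| \geq 2$, then the right-hand side is also $\geq 2$, forcing an active particle at $x_t$ and contradicting stability. If $|\eta^{v_{t-1}}(x_t)| = 1$, legality forces this particle to be active, and the inequality must in fact be an equality, which in turn forces $A_y^{v_\beta(y)}(x_t) = A_y^{v_{t-1}(y)}(x_t)$ for every neighbor $y$: the extra topplings that carry $v_{t-1}$ up to $v_\beta$ neither consume an instruction at $x_t$ nor send any particle there. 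Abelianness of the sitewise ARW dynamics then yields $\eta^{v_\beta}(x_t) = \eta^{v_{t-1}}(x_t)$, which is unstable, again a contradiction.

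The main obstacle is precisely this last ARW subcase: unlike in SS, the sleeping/active label at a site is a function of the full history of events there, not of a single counter, so equal total particle counts do not by themselves equate states. The fix is the observation above that in the marginal case no additional activity reaches $x_t$ between $v_{t-1}$ and $v_\beta$; combined with the abelian property for ARW in the sitewise formulation (a standard but nontrivial input, proved by local commutation independently of the LAP), this pins the state at $x_t$ under the two odometers to the same value and closes the argument.
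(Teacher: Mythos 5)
The paper does not prove this lemma itself; it imports it from the cited references (Rolla's survey and Rolla--Sidoravicius), so the comparison is really with the standard proofs there. Your argument is correct, but it takes a genuinely different route. The classical proof is a rearrangement induction on the length of $\alpha$: one first shows that a stabilizing $\beta$ must topple the unstable site $x_1$ at least once (topplings elsewhere can only add particles to $x_1$), then moves that toppling to the front of $\beta$ using local commutation, and inducts on $(x_2,\dots)$ versus the shortened $\beta$. You instead run the induction on the partial odometers $v_t$ of $\alpha$ and rule out the first violation by a mass-balance monotonicity argument, which for SS is immediate because the instruction effects are state-independent, so the particle count at $x_t$ is a monotone function of the odometer and $2\le s^{v_{t-1}}(x_t)\le s^{v_\beta}(x_t)$ contradicts stability directly. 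This buys a cleaner, more quantitative argument for SS, at the price of having to confront the genuinely state-dependent part of ARW (the sleep label) head-on; the rearrangement proof hides that issue inside the commutation lemma. You handle the one delicate spot --- the borderline ARW case $|\eta^{v_{t-1}}(x_t)|=|\eta^{v_\beta}(x_t)|=1$ --- correctly, but be precise about what you are invoking: not the global abelian property (Lemma 2 of the paper, which is normally \emph{deduced from} the LAP and would make the argument circular), but the elementary local fact that the state of a single site is a function of its initial state, the ordered list of instructions consumed there, and the number of particle arrivals, independent of the interleaving; this follows by checking that the ``receive a particle'' map and the instruction maps commute on $\mathbb{N}_{\mathfrak{s}}$, and your equality case supplies exactly the matching of these three data between $v_{t-1}$ and $v_\beta$. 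Stating that local commutation check explicitly would make the proof fully self-contained.
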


We next present the abelian property, which asserts that given a random set of instructions, the odometer function is independent of the choice of legal, stabilizing sequence. 

\begin{lemma} [Abelian Property, \citep{r6, r4}]
Fix a stack of instructions for SS, and let $\alpha$ and $\beta$ both be legal and stabilizing sequences. Then $v_\alpha = v_\beta.$

The same result holds for ARW odometers.
\end{lemma}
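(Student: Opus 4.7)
The Abelian Property as stated is a direct corollary of the Least Action Principle (Lemma 1), which has already been established. The plan is to apply Lemma 1 twice, swapping the roles of $\alpha$ and $\beta$, and conclude pointwise equality from the two resulting pointwise inequalities.

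More specifically, consider the SS case first. Since $\alpha$ is legal and $\beta$ is both legal and stabilizing, Lemma 1 immediately yields $v_\alpha(x) \leq v_\beta(x)$ for every $x \in \mathbb{Z}_n$. Now swap roles: since $\beta$ is legal and $\alpha$ is itself legal and stabilizing by hypothesis, another application of Lemma 1 gives $v_\beta(x) \leq v_\alpha(x)$ for every $x$. Combining these two inequalities yields $v_\alpha = v_\beta$ pointwise, as claimed. The ARW case is handled by the same argument verbatim, using the ARW version of Lemma 1: the sleep instructions $\delta^x_{\mathfrak{s}}$ are counted uniformly in the ARW odometer regardless of whether they succeed, so the symmetric swap of roles goes through without any change.

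The only point that truly needs care is the check that the hypotheses of Lemma 1 are genuinely symmetric in $\alpha$ and $\beta$. The statement of Lemma 1 requires only that the sequence being bounded from above be legal (not necessarily stabilizing) and that the comparison sequence be legal and stabilizing; both of our sequences satisfy both properties, so nothing prevents the swap. Because the argument is purely a formal consequence of Lemma 1, there is no substantive obstacle here; any difficulty in proving the Abelian Property is already absorbed into the proof of the Least Action Principle, which the paper cites from \citep{r6, r4}. The main thing to verify, and the sole place where one might conceivably slip, is that Lemma 1 as formulated is strong enough to apply in both directions; once this is confirmed the Abelian Property follows in one line.
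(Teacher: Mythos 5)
Your proof is correct: the paper itself does not prove Lemma 2 but cites it from \citep{r6, r4}, and the standard argument in those references is exactly your double application of the Least Action Principle with the roles of $\alpha$ and $\beta$ swapped, which is valid since both sequences are legal and stabilizing and hence satisfy the hypotheses of Lemma 1 in either role. Nothing further is needed.
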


The abelian property asserts that the odometer function for a given stack of instructions is well-defined and independent of the choice of legal and stabilzing sequence. Accordingly, we henceforth drop the subscript and refer to the odometer functions (for a given stack of instructions) as $v$ and $u$ for SS and ARW, respectively. We define the stabilization time to be number of toppling moves required for stabilization:
\begin{align*}
T_{SS} = \sum_{x \in \mathbb{Z}_n} v(x) \\
T_{ARW} = \sum_{x \in \mathbb{Z}_n} u(x) 
\end{align*}
which by the abelian property is independent of the choice of legal, stabilizing sequence. We emphasize that the above stabilization times are functions of the random stack of instructions.

\subsection{Fast-Phase Stabilization for ARW}
We conclude this section by stating the fast-phase stabilization result from \citep{r7}: for sufficiently high sleep rate, $ARW$ on the $n$-cycle stabilizes in $O(n^3)$ time.

\begin{lemma} [Fast-Phase ARW Stabilization \citep{r7}]
Let $T_{\text{ARW}}$ be the (random) stabilization time for ARW with sleep rate $\lambda(n)$ on $\mathbb{Z}_n$ with arbitrary initial particle configuration. If
\[
\liminf_{n \rightarrow \infty} \frac{\lambda(n)}{\log(n)} > \frac{1}{2},
\]
then 
\[
\lim_{n \rightarrow \infty} \mathbbm{P} (T_{\text{ARW}}(n) > B n^3) = 0
\]
for some constant $B$ independent of $n$.
\end{lemma}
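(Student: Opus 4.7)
The plan is to bound the total stabilization time $T_{\text{ARW}} = \sum_{x \in \mathbb{Z}_n} u(x)$ via a pointwise odometer estimate. By the abelian property (Lemma 2), we may use any legal, stabilizing toppling sequence to compute $u$. A natural choice is the \emph{explorer scheme}: at each step, follow a single active particle (toppling only its current site) until either it falls asleep or it wakes a sleeping particle; in the latter case, recurse with one of the two resulting active particles and defer the other. Under this scheme, each explorer's trajectory is a lazy symmetric random walk on $\mathbb{Z}_n$ in which every toppling is, independently, a sleep attempt with probability $\lambda/(1+\lambda)$ and a walk step with probability $1/(1+\lambda)$.

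The central estimate is the following: under $\liminf_n \lambda(n)/\log n > 1/2$, each explorer contributes at most $O(n^2)$ topplings to any given site with overwhelming probability, and only $O(n)$ such explorers suffice to reach the absorbing state. Summing then gives $u(x) = O(n^2)$ pointwise and hence $T_{\text{ARW}} = O(n^3)$ with high probability. For the first part, note that an explorer at an isolated site sleeps on its very next toppling with probability $\lambda/(1+\lambda) = 1 - O(1/\log n)$; so once the cycle contains $\Omega(n)$ sleepers, an active particle covers only $O(\log n)$ sites before either sleeping or colliding with a sleeper. The net effect is captured by the potential $\Phi_t = |\{x : \eta_t(x) = \mathfrak{s}\}|$, which, under the stated hypothesis on $\lambda$, has positive drift toward $n$.

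The main obstacle is the cascade effect: when an explorer wakes a sleeper, we obtain two active particles and lose one sleeper, potentially undoing a linear amount of prior progress. Controlling the cascade requires showing that it terminates quickly --- specifically, that the expected number of sleepers destroyed by the descendants of a single wake-up is bounded uniformly in $n$, which is precisely where the constant $\tfrac{1}{2}$ in the sleep-rate threshold enters. Making this rigorous, via a Peierls-type contour argument or a careful large-deviations estimate on the interaction of an explorer's random walk with the sea of sleepers surrounding it, is the technical heart of the argument in \citep{r7}, which we invoke here as a black box.
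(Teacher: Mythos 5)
The paper offers no proof of this lemma: it is imported verbatim from \citep{r7} (a paper in preparation) and used as a black box, so there is no internal argument to compare yours against. Your proposal ultimately does the same thing --- you explicitly defer ``the technical heart'' to \citep{r7} --- so at the level of logical dependency the two are identical, and citing the reference is the correct and expected move here.

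That said, the heuristic scaffolding you wrap around the citation should not be mistaken for partial progress, and it contains a bookkeeping inconsistency: you claim each explorer contributes $O(n^2)$ topplings \emph{to any given site} and that $O(n)$ explorers suffice, which would give $u(x) = O(n^3)$ pointwise and $T_{\text{ARW}} = O(n^4)$ in total, not the $u(x) = O(n^2)$ and $T_{\text{ARW}} = O(n^3)$ you then assert. (The consistent version would be $O(n^2)$ steps per explorer \emph{in total}, hence $O(n)$ local time per site per explorer.) More substantively, the two claims doing all the work --- that the sleeper-count potential $\Phi_t$ has positive drift once $\lambda \gtrsim \tfrac{1}{2}\log n$, and that the expected number of sleepers destroyed per wake-up cascade is bounded uniformly in $n$ --- are precisely the content of the cited theorem, not consequences of anything you establish. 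Since the lemma is attributed to \citep{r7} in the statement itself, the clean answer is simply ``this is Theorem~X of \citep{r7}''; the unverified sketch adds risk without adding rigor.
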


\section{Quotients of Markov Chains}
We now develop a framework by which we can couple SS to ARW. Our goal will be to compare ARW to SS by "projecting out" the particle states from the ARW Markov chain to obtain a Markov chain which is isomorphic to SS.

Let $H$ be a finite set, and consider the measurable space $(H,2^H)$. Let $S = \cup_{i = -1}^{|S|-2} H_i$ be a partition of $H$ into at least two sets, with none of the $H_i$ empty. Let the map $\pi : H \rightarrow S$ project an element of $H$ to its cell in $S$. We will consider the Markov chain $\{\eta_t\}_{t \in \mathbb{N}_0}$ taking values in $H$ with transition probability $p^H: H \times 2^H \rightarrow \mathbb{R}$, and which starts in a state $\eta_0$ such that $\pi(\eta_0) = H_0.$

\begin{definition} We call the partition $S = \{H_{-1},H_0,...\}$ \textbf{Markov-compatible with} $\{\eta_t\}$ \textbf{up to} $H_{-1}$ if
for all $H_i \in S \backslash H_{-1}$ and $H_j \in S,$ $p^H(\eta,H_j)$ is independent of the choice of  $\eta \in H_i$.
\end{definition}

\begin{definition} Let $\{\eta_t\}$ be a Markov chain on a finite set $H$ with initial state $\eta_0$, and let $S = \{H_{-1},H_0,...\}$ be a partition of $H$ which is Markov-compatible with $\{\eta_t\}$ up to $H_{-1}$. The \textbf{quotient transition probability} $p^H_S$ is defined as follows:
\[
p_S^H (H_i, H_j) = \begin{cases}
p^H(\eta, H_j) \text{ for any } \eta \in H_i & H_i \neq H_{-1} \\
\delta_{H_i,H_j} & H_i = H_{-1}.
\end{cases}
\]
Furthermore, let $\{s_t \}_{t \in \mathbb{N}_0}$ be a Markov chain taking values in $S$ with transition probability $p^H_S,$ and which starts in state $s_0 = H_0 \ni \eta_0.$ We refer to this as the \textbf{quotient Markov chain} of $\{\eta_t\}.$
\end{definition}
Note that the quotient transition probability is well-defined by the definition of Markov-compatibility, and is easily seen to be a bona-fide transition probability. Before stating our main Lemma from this section, we collect another definition:
\begin{definition}
Let $T_{-1}$ be the \textbf{hitting time for the set }$H_{-1}$:
\[
T_{-1} := \min\{t \geq 0 \, : \, \eta_t \in H_{-1}\},
\]
and let $\overline{\eta}_t := \eta_{\min (t,T_{-1})}$ be the corresponding \textbf{stopped Markov Chain}.
\end{definition}

We now make the following claim:
\begin{lemma} [Quotient Markov Chains] $(s_0,s_1,...)$ has the same law as $(\pi (\overline{\eta}_0), \pi (\overline{\eta}_1),...)$. \end{lemma}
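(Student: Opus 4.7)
My plan is to show by induction on $t$ that the finite-dimensional distributions of the two processes agree: for any $(h_0, \ldots, h_t) \in S^{t+1}$,
\[
\mathbbm{P}(s_0 = h_0, \ldots, s_t = h_t) = \mathbbm{P}(\pi(\overline{\eta}_0) = h_0, \ldots, \pi(\overline{\eta}_t) = h_t).
\]
The base case $t = 0$ is immediate, since both processes start deterministically at $H_0$. Since the state space $S$ is finite and the processes are discrete-time, agreement of all finite-dimensional distributions will yield equality in law of the entire sample paths.

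For the inductive step, I would split into cases based on the sequence $(h_0, \ldots, h_t)$. If $h_r = H_{-1}$ for some $r \leq t$, then by definition of the stopped chain $\overline{\eta}_s = \overline{\eta}_r$ for $s \geq r$, which forces $h_s = H_{-1}$ for all $s \geq r$; the quotient chain exhibits identical behavior because $p^H_S(H_{-1}, H_j) = \delta_{H_{-1}, H_j}$. In this absorbed case, both probabilities reduce via truncation to the joint probability through step $r$, and agreement follows from the inductive hypothesis. If instead $h_t \neq H_{-1}$, then $h_s \neq H_{-1}$ for all $s \leq t$, so on the event in question $T_{-1} > t$ and hence $\overline{\eta}_s = \eta_s$ for $s \leq t$. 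Writing the left-hand event in terms of the unstopped chain and conditioning on $\eta_t$,
\[
\mathbbm{P}\bigl(\pi(\overline{\eta}_s) = h_s, \, s \leq t+1\bigr) = \sum_{\eta' \in H_{h_t}} \mathbbm{P}\bigl(\eta_s \in H_{h_s}, \, s < t, \, \eta_t = \eta'\bigr) \cdot p^H(\eta', H_{h_{t+1}}).
\]
Because $h_t \neq H_{-1}$, Markov-compatibility makes $p^H(\eta', H_{h_{t+1}})$ constant over $\eta' \in H_{h_t}$, equal to $p^H_S(h_t, h_{t+1})$. Factoring this out of the sum and applying the inductive hypothesis yields $p^H_S(h_t, h_{t+1}) \cdot \mathbbm{P}(s_0 = h_0, \ldots, s_t = h_t)$, which is exactly the Markov factorization of the right-hand side.

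The main obstacle is conceptual rather than computational: one must verify that conditioning on a coarser trajectory $(\pi(\overline{\eta}_0), \ldots, \pi(\overline{\eta}_t))$ does not introduce a dependence on the microscopic state $\eta_t$ beyond its cell. Markov-compatibility is precisely the hypothesis that rules out such information leakage for cells $H_i \neq H_{-1}$, and the stopping at $T_{-1}$ is what makes the (expected) failure of Markov-compatibility on $H_{-1}$ in the SS/ARW application harmless. Beyond this, the remainder of the argument is bookkeeping about when one may replace $\overline{\eta}_s$ by $\eta_s$ and careful identification of events.
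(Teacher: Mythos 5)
Your proof is correct, and it follows the same essential mechanism as the paper's --- Markov-compatibility makes the one-step transition probability out of a cell $H_i \neq H_{-1}$ independent of the representative, and the stopping at $T_{-1}$ neutralizes the one cell where compatibility may fail. The difference is in how much is actually verified. The paper's proof declares that it suffices to establish the single identity $p^H_S(H_i,H_j) = \mathbb{P}\big(\pi(\overline{\eta}_{t+1}) = H_j \mid \pi(\overline{\eta}_t) = H_i\big)$ and then checks it in the two cases; this leaves implicit the fact that $\{\pi(\overline{\eta}_t)\}$ is itself a Markov chain, i.e.\ that conditioning on the full coarse history $(\pi(\overline{\eta}_0),\ldots,\pi(\overline{\eta}_t))$ rather than just $\pi(\overline{\eta}_t)$ changes nothing. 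Your induction on finite-dimensional distributions supplies exactly that missing step (it is the classical lumpability argument): in the unabsorbed case you condition on the microscopic state $\eta_t = \eta'$, observe that $p^H(\eta', H_{h_{t+1}})$ is constant on the cell by Markov-compatibility, factor it out, and invoke the inductive hypothesis. So your write-up is, if anything, more complete than the paper's. Two cosmetic points: your case split should be stated as ``some $h_r = H_{-1}$ with $r \le t$'' versus ``no $h_r = H_{-1}$ with $r \le t$'' --- as written, ``$h_t \neq H_{-1}$ implies $h_s \neq H_{-1}$ for all $s \le t$'' is not a logical implication for arbitrary sequences (though in that excluded situation both sides vanish, so nothing breaks); and you use $h_t$ and $H_{h_t}$ interchangeably for the same cell of the partition, which is worth cleaning up.
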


\begin{proof} It suffices to show that:
\begin{equation}
p^H_S(H_i,H_j) = \mathbb{P} \big(\pi(\overline{\eta}_{t+1}) = H_j \, | \, \pi(\overline{\eta}_t) = H_i\big).
\end{equation}
First we assume that $H_i \neq H_{-1}.$ Consider the RHS of $(9).$ Since $\pi(\overline{\eta}_t) \neq H_{-1},$ we know that $t < T_{-1}.$ Thus the RHS of $(9)$ can be written as
\[
\mathbb{P} \big(\pi(\eta_{t+1}) = H_j \, | \, \pi(\eta_t) = H_i\big) = \mathbb{P} \big(\eta_{t+1} \in H_j \, | \, \eta_t \in H_i\big) = p^H(\eta \in H_i, H_j)
\]
where the last expression is well-defined over the choice of $\eta$ because $S$ is Markov-compatible up to $H_{-1}.$ Using the definition of $p_S^H$ for $H_i \neq H_{-1},$ we are done with the case of $t < T_{-1}.$

Now let $H_i = H_{-1}$ in the RHS of $(9).$ Since we are conditioning on $\pi(\overline{\eta}_t) = H_{-1},$ we have that $t \geq T_{-1}.$ Thus we can write
\[
\mathbb{P} \big(\pi(\overline{\eta}_{t+1}) = H_j \, | \, \pi(\overline{\eta}_t) = H_{-1}\big) = \mathbb{P} \big(\pi(\eta_{T_{-1}}) = H_j \, | \, \pi(\eta_{T_{-1}}) = H_{-1}\big) = \delta_{H_j, H_{-1}},
\]
which is equal to $p_S^H(H_{-1},H_j).$ This proves the Lemma.
\end{proof}

\section{SS as a quotient of ARW}
This section is devoted to showing that the stochastic sandpile model is isomorphic to a quotient Markov chain of activated random walk. We first define a (legal and stabilizing) toppling prescription for both models, so that each can be viewed as a well-defined Markov chain.

\subsection{A Toppling Prescription for ARW and SS}
We would now like to fix a legal, stabilizing toppling prescription for ARW, which we will write as a function of time: $j : \mathbb{N}_0 \rightarrow \mathbb{Z}_n$. Note that $j$ is also a function of the random field of instructions, though we leave this dependence implicit for ease of notation. We define $j(t)$ for even and odd times separately. For all $k \in \mathbb{N}_0$:
\begin{align}
j(2k) &= \begin{cases}
\min \{x \, : \, s_{2k}(x) \geq 2\} & \text{if } \max_{x \in \mathbb{Z}_n} s_{2k}(x) \geq 2 \\
\min \{x  \, : \, x \text{ has an active particle}\} & \text{else}
\end{cases} \\
j(2k + 1) &= \begin{cases}
j(2k) & \text{if } \max_{x \in \mathbb{Z}_n} s_{2k}(x) \geq 2 \\
\min \{x  \, : \, ix\text{ has an active particle}\} & \text{else}.
\end{cases}
\end{align}
In words: at even times, we topple the first site counterclockwise of the origin with $2$ or more particles. If we are unable to find such a site, we topple the first site counterclockwise of the origin with an active particle. Note that in either case, the toppling move is always legal. If the former case occurs, that is, if we've just toppled a site with $2$ or more particles, we topple this same site again at the next odd time. This toppling move is legal, since the particle which remains at the site (there is at least one) will still be active. If the latter case occured, that is, if we toppled a single active particle, we again topple the first active particle counterclockwise of the origin, if one exists. We stop toppling once we've stabilized, that is, once every particle is asleep. With the toppling prescription fixed, we can now think of ARW as a Markov chain. The state space $H$ of this Markov chain is the set of all ARW configurations, that is, the set of functions $\eta: \mathbb{Z}_n \rightarrow \mathbb{N}_{\mathfrak{s}}$ satisfying Definition 2. 

Our toppling prescription for SS will simply be to topple the first site counterclockwise of the origin, that is, we topple the site 
\begin{equation}
j(t) = \min\{x \, : \, s_t(x) \geq 2\}.
\end{equation}

\subsection{Coupling}

\begin{definition}
The \textbf{SS Markov Chain} $\{s_t\}_{t \in \mathbb{N}_0}$ takes values in the set $S$ of all SS configurations. We let one Markov chain step correspond to one SS toppling, according to the toppling prescription given in (12).

The \textbf{ARW Markov Chain} $\{\eta_t\}_{t \in \mathbb{N}_0}$ takes values in the set $H$ of all ARW configurations. We let one Markov chain step correspond to two ARW topplings, according to the toppling prescription given in (10) and (11).
\end{definition}

For ease of coupling SS with ARW, we let one step in the ARW Markov chain correspond to two ARW toppling moves. That is, if ARW stabilizes in $T_{ARW}$ toppling moves, we think of this as $\lceil \frac{T_{ARW}}{2} \rceil$ Markov Chain steps. Each SS toppling move will constitute one SS Markov chain step.

Consider the equivalence relation on the set of ARW configurations defined as follows:
\begin{equation}
\text{For } \eta_1, \eta_2 \in H, \eta_1 \sim \eta_2 \iff |\eta_1(x)| = |\eta_2(x)| \quad \forall x.
\end{equation}
It is clear that the partition given by $\sim$ can be identified with the set of all stochastic sandpile configurations $S$. Using notation from the previous section, we define $H_{-1} = (1,1,...,1)$. We now show the following:

\begin{lemma} The partition $S$ is Markov-compatible with the ARW Markov chain up to $H_{-1}.$ \end{lemma}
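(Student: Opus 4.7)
The plan is to verify Definition 3 by showing that the two-toppling transition probability $p^H(\eta, H_j)$ depends on $\eta$ only through its cell $H_i = \pi(\eta)$. The argument reduces to three claims: (i) the two toppling sites of a Markov step are determined by $|\eta|$; (ii) at each of those topplings, the distribution over the three ARW outcome-types is fixed regardless of $\eta$; and (iii) the induced effect of each outcome-type on $|\eta|$ is deterministic, depending only on the outcome and on the current $|\eta|$. Together these yield that the law of $|\eta_{t+1}|$ given $\eta_t = \eta$ is a function of $|\eta_t|$ alone.

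First I would fix a cell $H_i \neq H_{-1}$ and two representatives $\eta, \eta' \in H_i$ with $|\eta| = |\eta'| = s$. Since $s \neq (1,\dots,1)$ we have $\max_y s(y) \geq 2$, so the toppling prescription (10)--(11) selects the same site $x := \min\{y : s(y) \geq 2\}$ at both the even and the odd sub-step, for both representatives. Because sleeping particles must be alone at their site, all $s(x) \geq 2$ particles at $x$ are active in $\eta$ and in $\eta'$; hence the first toppling is legal in both and draws from Table 2 with the fixed probabilities $\lambda/(1+\lambda)$, $1/(2(1+\lambda))$, $1/(2(1+\lambda))$, independently of which representative we started from.

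Next I would check, using the arithmetic rules (5)--(8), that for each outcome of $\delta^x$ the induced change in $|\eta|$ is independent of the particle-state detail at $x$ and at its neighbours. The sleep operator $\eta(x) \mapsto \eta(x)\cdot \mathfrak{s}$ preserves $|\eta(x)|$ whether it succeeds (when $|\eta(x)| = 1$) or fails (when $|\eta(x)| \geq 2$); the step operator $\eta(x\pm 1) \mapsto \eta(x \pm 1) + 1$ increments $|\eta(x\pm 1)|$ by one whether the neighbour previously had $0$, had $\geq 1$ active particle, or had a single sleeping particle (in the last case the sleeper wakes, but is still counted by $|\cdot|$). So after the first toppling, the post-image in terms of $|\eta|$ is a deterministic function of $|\eta_t|$ and of the outcome-type drawn from the fixed marginal. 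For the second toppling at $x$, I would verify legality: a preceding sleep attempt leaves all $\geq 2$ active particles at $x$, and a preceding step leaves $s(x) - 1 \geq 1$ active particle (and no sleeper, since sleepers cannot coexist with active particles). Thus the second toppling again has the same three outcome-types with the same fixed probabilities, and the same deterministic $|\eta|$-effects apply. Composing the two topplings gives a distribution for $|\eta_{t+1}|$ depending on $\eta_t$ only through $|\eta_t|$, which is exactly Markov-compatibility of $S$ up to $H_{-1}$.

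The main obstacle, modest as it is, lies in the intermediate state: the sleeping/active pattern at $x$ and at $x \pm 1$ after the first toppling can genuinely differ between $\eta$ and $\eta'$, and one must argue that this hidden variation cannot influence the distribution of the $|\cdot|$-image of the second toppling. The observation defusing this worry is the conjunction of two facts, each of which needs to be spelled out with a small case analysis: the marginal probabilities of $\delta^x_{\mathfrak{s}}, \delta^x_{+1}, \delta^x_{-1}$ are configuration-independent, and the $|\cdot|$-effect of each of these three operators is determined by the operator type alone via the conventions (5)--(8).
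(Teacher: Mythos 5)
Your proposal is correct and takes essentially the same approach as the paper: both arguments rest on the facts that the toppled site is determined by $|\eta|$ alone, that the three ARW outcome probabilities from Table 2 are configuration-independent, and that the effect of each outcome on the particle counts $|\eta|$ does not depend on which particles are sleeping. The only organizational difference is that you factor the Markov step through the two individual topplings (explicitly handling legality and the intermediate state), whereas the paper enumerates the six joint outcomes $(\rho_-,\rho_+)$ and computes the resulting transition probabilities directly, which it needs anyway for Table 3 and the Coupling Lemma.
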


\begin{proof} Let $H_i \in S \backslash H_{-1}$, $\eta_1,\eta_2 \in H_i$, and $H_j \in S$ be artbirary. We need to show that $p^H (\eta_1,H_j) = p^H (\eta_2, H_j).$ Let $s_i, s_j$ be the stochastic sandpile configurations (ignoring states of particles) representing $H_i$ and $H_j,$ respectively. Since neither of $\eta_1, \eta_2$ are in $H_{-1},$ we topple (in the ARW sense) the clockwise-most site with two or more particles twice. In particular, the site to be toppled is the same for $\eta_1$ as it is for $\eta_2$. Call this site $x.$ 

We now state a pair of conditions which are both necessary for a state $\eta \in H_i$ to have non-zero transition probability to $H_j.$ Firstly, note that the numbers and states of particles at sites other than $x, x-1, x+1$ are unchanged by a toppling move, so that a set $H_j$ is accessible to $\eta \in H_i$ only if $s_j|_{\{x-1,x,x+1\}^c} = s_i|_{\{x-1,x,x+1\}^c}$. Second, make the following definitions:
\begin{align}
\rho_- &:= s_j (x-1) - s_i(x-1) \\
\rho_+ &:= s_j (x+1) - s_i(x+1)
\end{align}
where $x-1$ and $x+1$ are to be interpreted modulo $n$. $H_j$ is accessible from $\eta \in H_i$ only if $(\rho_-,\rho_+)$ satisfy equations (2) and (3), and $s_j(x) - s_i(x) = - (\rho_- + \rho_+).$ This follows from particle conservation and the fact that at most two particles can be displaced per toppling.

We now assume that $H_i$ and $H_j$ are such that both of the above conditions are met (if this is not the case, then $p^H(\eta,H_j) = 0$ for all $\eta \in H_i$). We enumerate all possible sets $H_j$ to which $\eta \in H_i$ can transition with positive probability by the pair $(\rho_-,\rho_+).$ In what follows, we argue that the transition from $H_i$ to $H_j$ is independent of which state in $H_i$ we move from. We discuss each possibility for $(\rho_-,\rho_+),$ case by case.\newline \textbf{Case 1: $(\rho_-, \rho_+) = (0,0)$}: This case holds if and only if neither particle is displaced from $x$, that is, that both particles being toppled attempted to go to sleep. This happens with probability $\big(\frac{\lambda}{1+\lambda}\big)^2,$ independently of the details of the state $\eta_1$ or $\eta_2$. Note that this case implies $H_j = H_i.$ \newline
\textbf{Case 2: $(\rho_-, \rho_+) = (1,0)$}: There are two possible toppling outcomes corresponding to this case: a) the first particle steps clockwise, while the second one tries to fall asleep, and b) the first particle tries to fall asleep, and the second one steps clockwise. Each of these two possibilities occurs with probability $\frac{1/2}{1+\lambda }\frac{\lambda}{1+\lambda},$ giving a total transition probability of $\frac{\lambda}{(1+\lambda)^2}$. 
\newline
\textbf{Case 3: $(\rho_-, \rho_+) = (0,1)$}: Both particles step counterclockwise. This is similar to Case 2.
\newline
\textbf{Case 4:  $(\rho_-, \rho_+) = (1,1)$}: One particle steps clockwise, and one particle steps counterclockwise. This happens with probability $2 \big( \frac{1/2}{1+\lambda}\big)^2$, where the factor of $2$ represents that the left and right steps can occur in any order. 
\newline
\textbf{Case 5:  $(\rho_-, \rho_+) = (2,0)$}: Both particles step clockwise. This occurs with probability $\big( \frac{1/2}{1+\lambda}\big)^2$.
\newline
\textbf{Case 6:  $(\rho_-, \rho_+) = (0,2)$}: Both particles step counterclockwise. This is similar to Case 5.

Since each transition probability is independent of the choice of ARW configuration in $H_i,$ we have shown that $S$ is Markov-compatible with the ARW Markov chain up to $H_{-1}.$ We record our non-zero transition probabilities in Table 3.
\end{proof}

\begin{table}
\begin{center}
{\renewcommand{\arraystretch}{1.5}
\begin{tabular}{ ? m{4.5cm}  | m{17em} | m{3.8cm} ? }
  \Xhline{5\arrayrulewidth}
\vspace{.1in} 
  Original State $\eta|_{\{x-1,x,x+1\}}$& $\Big( \eta(x-1),\eta(x),\eta(x+1)\Big)$ &  \\ 
  \Xhline{5\arrayrulewidth}
\vspace{.08in}
   $(\rho_-, \rho_+)$\vspace{.08in} & Transformed states $\eta'|_{\{x-1,x,x+1\}}$ & Transition Probabilities  \\ 
    \Xhline{5\arrayrulewidth}
  (0,0) &  $\Big( \eta(x-1),\eta(x),\eta(x+1)\Big)$ &  $\big(\frac{\lambda}{1+\lambda}\big)^2$ \\
    \Xhline{1\arrayrulewidth}
  (1,0) &  $\Big( \eta(x-1)+ 1,\eta(x) - 1,\eta(x+1)\Big)$ &  $\frac{\lambda}{1+\lambda} \frac{1/2}{1+\lambda}$ \\
   &  $\Big( \eta(x-1)+ 1,(\eta(x) - 1)\cdot \mathfrak{s},\eta(x+1)\Big)$ &  $\frac{1/2}{1+\lambda }\frac{\lambda}{1+\lambda} $ \\
    \Xhline{1\arrayrulewidth}
  (0,1) &  $\Big( \eta(x-1),\eta(x) - 1,\eta(x+1) + 1\Big)$ &  $\frac{\lambda}{1+\lambda} \frac{1/2}{1+\lambda}$ \\
   &  $\Big( \eta(x-1),(\eta(x) - 1)\cdot \mathfrak{s},\eta(x+1) + 1\Big)$ &  $\frac{1/2}{1+\lambda}\frac{\lambda}{1+\lambda} $ \\
    \Xhline{1\arrayrulewidth}
  (1,1) &  $\Big( \eta(x-1) + 1,\eta(x) - 2,\eta(x+1) + 1\Big)$ &  $2 \big( \frac{1/2}{1+\lambda}\big)^2$ \\
    \Xhline{1\arrayrulewidth}
   (2,0) &  $\Big( \eta(x-1) + 2,\eta(x) - 2,\eta(x+1)\Big)$ &  $\big( \frac{1/2}{1+\lambda}\big)^2$ \\
    \Xhline{1\arrayrulewidth}
   (0,2) &  $\Big( \eta(x-1),\eta(x) - 2,\eta(x+1)+2\Big)$ &  $\big( \frac{1/2}{1+\lambda}\big)^2$ \\
  \Xhline{5\arrayrulewidth}
\end{tabular}
}
\end{center}
\caption{\label{tab:table-name} Possible transitions of the ARW Markov Chain. Each pair $(\rho_-, \rho_+)$ represents an $H_j$ accessible from $\eta,$ with the middle column listing the accessible states $\eta' \in H_j$ from $\eta.$}
\end{table}

\begin{lemma}[Coupling Lemma]
Consider ARW with sleep rate $\lambda(n)$. For any initial configuration $\eta_0,$ the stopped, projected Markov Chain $\{\pi(\overline{\eta}_t\}_{t \in \mathbb{N}_0}$ on $S$ is isomorphic to the SS Markov chain $\{s_t\}_{t \in \mathbb{N}_0}$ on $S$, starting from $s_0 = \pi (\eta_0),$ with lazy parameter $p(n) = \frac{\lambda(n)}{1 + \lambda(n)}.$
\end{lemma}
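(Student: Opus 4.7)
The plan is to apply Lemma 4 to the ARW Markov chain with the partition $S$, verify that the resulting quotient transition kernel coincides with the SS transition kernel under the identification $p = \lambda/(1+\lambda)$, and conclude that both chains have the same law on $S$.

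By Lemma 5 together with Lemma 4, $\{\pi(\overline{\eta}_t)\}_{t \in \mathbb{N}_0}$ is a Markov chain on $S$ with transition kernel $p_S^H$, and both it and $\{s_t\}$ start at $\pi(\eta_0) = H_0$, so it suffices to match the kernels. On the absorbing state $H_{-1} = (1,\ldots,1)$ this is immediate: $H_{-1}$ is both the unique SS-stable configuration (from which the SS chain performs no further topplings) and, by definition of $p_S^H$, absorbing for the quotient chain. For $\pi(\eta_t) \neq H_{-1}$, the fact that there are $n$ particles on $n$ sites forces some site to carry at least two particles; the SS prescription (12) and the ARW prescription (10)-(11) both single out the same site $x^* = \min\{x : |\eta_t|(x) \geq 2\}$. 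Because the first ARW toppling at $x^*$ leaves at least one active particle there (as one reads off Table 3, since sleep attempts are rejected when $\eta(x^*) \geq 2$ and each step removes only one particle from a site with at least two), the second ARW toppling, which together with the first forms one ARW Markov-chain step, again occurs at $x^*$. Thus the one-step effect on the triple $(x^*-1, x^*, x^*+1)$ has the same combinatorial form as a single SS toppling.

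It remains to check that the probabilities agree. Summing the entries of Table 3 within each equivalence class (which merges the two orderings in the $(1,0)$ and $(0,1)$ rows) yields transition probabilities indexed by $(\rho_-, \rho_+)$ equal to
\begin{align*}
(0,0) &: \bigl(\tfrac{\lambda}{1+\lambda}\bigr)^2, &
(1,0) &: \tfrac{\lambda}{(1+\lambda)^2}, &
(0,1) &: \tfrac{\lambda}{(1+\lambda)^2}, \\
(1,1) &: \tfrac{1}{2(1+\lambda)^2}, &
(2,0) &: \tfrac{1}{4(1+\lambda)^2}, &
(0,2) &: \tfrac{1}{4(1+\lambda)^2},
\end{align*}
which, under $p = \lambda/(1+\lambda)$ and $1-p = 1/(1+\lambda)$, reproduce Table 1 line by line. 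The only real obstacle is bookkeeping: one must identify which ARW outcomes collapse into each SS outcome (notably the two orderings of sleep/step in the $(1,0)$ and $(0,1)$ cases) and verify that the pairing of ARW topplings into a single Markov-chain step is always internally consistent, i.e.\ the second toppling in each pair is legal at the same site $x^*$. Both are handled by the toppling prescription (10)-(11) and the case analysis already carried out for Lemma 5.
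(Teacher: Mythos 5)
Your proposal is correct and follows essentially the same route as the paper: invoke Lemmas 4 and 5 to reduce to matching the quotient kernel $p_S^H$ against the SS kernel, then sum the rows of Table 3 within each $(\rho_-,\rho_+)$ class and check they reproduce Table 1 under $p=\lambda/(1+\lambda)$. The extra care you take with the absorbing state $H_{-1}$ and the legality of the second ARW toppling at the same site is already handled in the paper's toppling prescription and the proof of Lemma 5, so it is consistent detail rather than a new idea.
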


\begin{proof} By Lemmas 4 and 5, it suffices to show that the SS Markov chain evolves according to the transition probability $p^H_S.$ This is easily checked by comparing corresponding rows in Tables 1 and 3. For any given pair $(\rho_-, \rho_+)$ which represents an accessible $H_j$ in the quotient chain, we obtain $p_S^H$ by summing the transition probabilities of all accessible states in $H_j$ in Table 3. Setting $p(n) = \frac{\lambda(n)}{1 + \lambda(n)},$ this gives transitions identical to those found in Table 1.
\end{proof}
 
\begin{proof}[Proof of Theorem 1]

Theorem 1 is a simple consequence of Lemma 6. Consider the coupled isomorphic Markov chains. For a fixed field of instructions, the number of times each site has SS-toppled is exactly two times the number of times each site has ARW-toppled up until the very last step, thanks to our compatbile toppling prescriptions defined in (10), (11), and (12). The last step can correspond to either one or two ARW topplings, giving the formula $v(x) = \lceil \frac{\overline{u}}{2} \rceil$. That $\overline{u}(x) \leq u(x)$ almost surely for all $x$ is trivial.
\end{proof}

%
%

\section*{Acknowledgements}
The author thanks Lionel Levine for introducing him to the problem and for his helpful discussions and ideas. The author is partially supported by grant DMS-1455272.

\end{document}